\documentclass{article}

\usepackage{amsmath, amsfonts, amsthm, amssymb}
\usepackage{graphicx}
\usepackage{enumitem}
\usepackage{thmtools, mathtools}
\usepackage{hyperref}
\usepackage{fullpage}
\usepackage{authblk}
\usepackage{xcolor}

\newtheorem{theorem}{Theorem}
\newtheorem{lemma}[theorem]{Lemma}
\newtheorem{corollary}[theorem]{Corollary}

\newtheorem{remark}[theorem]{Remark}
\newtheorem{conjecture}[theorem]{Conjecture}

\DeclareEmphSequence{\bfseries\itshape}

\newcommand{\ra}{\rightarrow}
\newcommand{\Ra}{\Rightarrow}
\newcommand{\la}{\leftarrow}

\newcommand{\sm}{\setminus}

\newcommand{\dic}{\vec{\chi}}

\newcommand{\dichia}{\vec{\chi}_{\rm a}}

\renewcommand{\epsilon}{\varepsilon}
\renewcommand{\emptyset}{\varnothing}

\renewcommand{\phi}{\varphi}

\let\le\leqslant
\let\ge\geqslant
\let\leq\leqslant
\let\geq\geqslant

\newcommand{\first}{\mathrm{f}}
\newcommand{\last}{\mathrm{\ell}}

\newtheorem{claim}{Claim}[theorem]
\newenvironment{proofclaim}[1][]{\par\noindent {\it Proof of claim}. }{ \hfill$\lozenge$\par\addvspace{6pt plus 6pt}}

\def\bm{b_{\min}}
\def\bM{b_{\max}}    

\def\Apm{A^{+-}}
\def\Gpm{G^{+-}}

\title{Acyclic Dichromatic Number of Tournaments:\\these are the Champions}

\author[1]{Pierre Aboulker}
\author[2]{Pierre Charbit}
\author[2]{Samuel Coulomb}
\author[1]{Kathryn Nurse}
\author[3]{Lucas Picasarri-Arrieta\thanks{Research supported by JSPS KAKENHI JP20A402 and 22H05001, and by JST ASPIRE JPMJAP2302.}$^,$}

\affil[1]{DIENS, École normale supérieure, CNRS, PSL University, Paris, France.}
\affil[2]{Université Paris Cité, CNRS, IRIF, F-75006, Paris, France.}
\affil[3]{National Institute of Informatics, Tokyo, Japan.}

\begin{document}

\maketitle

\begin{abstract}
    The acyclic dichromatic number of an oriented graph is the minimum size of a vertex-partition such that the digraphs induced by any single part are acyclic, and the oriented bipartite graphs between any two parts are acyclic too. We characterize the subtournaments that must appear in every tournament with sufficiently large acyclic dichromatic number, and prove that acyclic dichromatic number satisfies a local to global property,  thereby confirming two conjectures of Bang-Jensen, Picasarri-Arrieta, and Yeo. 
\end{abstract}

\section{Introduction}
An acyclic colouring of a graph is a proper vertex colouring such that every pair of colour classes induces an acyclic graph, that is, a forest. This notion was introduced in 1973 by Gr\"unbaum \cite{Grunbaum} who proved that every planar graph has an acyclic 9-colouring, and conjectured that 5 colours are enough. This bound was successively improved by Mitchem \cite{mitchem}, Albertson and Berman \cite{albertson}, and Kostochka \cite{kostochka}, until Borodin settled this by showing that every planar graph has an acyclic 5-colouring \cite{Borodin}. In particular, this results implies that every planar graph can be decomposed into an independent set and two induced forests.

In the setting of directed graphs, vertex colouring has been widely studied since the introduction of the dichromatic number by Erd\H{o}s~\cite{erdosPNCN1979} and Neumann-Lara~\cite{VNL}. A notable achievement is the characterization of heroes -- the subtournaments that appear in all tournaments with large enough dichromatic number -- by Berger, Choromanski, Chudnovsky, Fox, Loebl, Scott, Seymour, and Thomassé \cite{hero}. Another important result is the so-called "local-to-global" property for dichromatic number proven by Harutyunyan, Le, Thomassé and Yu \cite{HARUTYUNYAN2019166}:  there is a function $f$ such that if the out-neighborhood of every vertex in a tournament $T$ has dichromatic number at most $c$, then $T$ has dichromatic number at most $f(c)$.

Recently, Bang-Jensen, Picasarri-Arrieta, and Yeo~\cite{bj25ADN} defined a directed analogue to acyclic colouring: an acyclic dicolouring of a digraph is a dicolouring where the arcs between any two colour classes form an acyclic set. They raised the question of which are the tournaments analogous to heroes for this notion, and proposed a conjecture. They also conjectured the fact that acyclic dichromatic number also satisfies a local-to-global property. In this paper we confirm these two conjectures.

\subsection{Definitions and notations}

Given a positive integer $n$, we denote by $[n]$ the set $\{1, \dots, n\}$.
All graphs and digraphs in this paper are simple, finite, and contain no parallel or anti-parallel arcs.

Let $D$ be a digraph, $u,v \in V(D)$ two vertices, and $X,Y \subseteq V(D)$ two sets of vertices. If $uv \in E(D)$, we say that $u$ is an \emph{in-neighbour} of $v$, and that $v$ is an \emph{out-neighbour} of $u$; we also write $u \ra v$ to denote that $uv$ is an arc of $D$. Given a vertex $u \in V(D)$, we denote by $u^+$ the set of its out-neighbours, and by $u^-$ the set its of in-neighbours. 
We denote by $D[X,Y]$ the subdigraph of $D$ with vertex set $X \cup Y$ and containing every arc of $D$ having one end in $X$ and the other end in $Y$.
In particular, $D[X,X]$ is the subdigraph induced by $X$, also noted $D[X]$.
We write $X \Ra Y$ to say that for all vertices $x \in X$ and $y \in Y$ we have $x \ra y$.
Given two digraphs $D_1$ and $D_2$, we denote by $D_1 \Ra D_2$ the digraph obtained from the disjoint union of $D_1$ and $D_2$ by adding all arcs from $V(D_1)$ to $V(D_2)$. 

A \emph{tournament} is an orientation of a complete graph, that is, an oriented graph with exactly one arc between every pair of vertices. We say that a tournament is \emph{transitive} if it is acyclic, and we denote by $TT_n$ the unique transitive tournament on $n$ vertices.  
Given three tournaments $T_1, T_2, T_3$, we denote by $\Delta(T_1,T_2,T_3)$ the tournament obtained from  disjoint copies of $T_1, T_2, T_3$ by adding the remaining arcs so that $T_1 \Ra T_2 \Ra T_3 \Ra T_1$. For the sake of better readability, we may write $k$ in place of $TT_k$ in this last notation; for example, $\Delta(1,k,T)$ stands for $\Delta(TT_1,TT_k,T)$.
Given two tournaments $H$ and $T$, we say that $T$ is \emph{$H$-free} if it $T$ contains no subtournament isomorphic to $H$.

A \emph{$k$-dicolouring} of a digraph $D$ is a function  $\phi\colon V(D) \rightarrow [k]$ such that $D[\phi^{-1}(c)]$ is acyclic for every colour $c\in [k]$. The sets $\phi^{-1}(c)$ for $c\in [k]$ are called the \emph{colour classes} of $\phi$.
A digraph is \emph{$k$-dicolourable} if it admits a $k$-dicolouring.  
The \emph{dichromatic number} of a digraph $D$, denoted by $\dic(D)$, is the least integer $k$ such that $D$ is $k$-dicolourable. 
A tournament $H$ is a \emph{hero} if there exists an integer $c_H$ such that every $H$-free tournament has dichromatic number at most $c_H$.
In a seminal paper, Berger, Choromanski, Chudnovsky, Fox, Loebl, Scott, Seymour, and Thomassé~\cite{hero} characterized heroes.

\begin{theorem}[Berger {\it et al.}~\cite{hero}]\label{thm:heroes}
    A tournament $H$ is a hero if and only if
    \begin{itemize}[itemsep=0pt,topsep=2pt]
        \item $H$ has a single vertex, or
        \item $H \simeq (H_1 \Ra H_2) $, where $H_1$ and $H_2$ are heroes, or
        \item $H \simeq \Delta(1, k, H')$ or $H \simeq \Delta(H', k, 1)$, where $H'$ is a hero and $k \ge 1$ an integer.
    \end{itemize}
\end{theorem}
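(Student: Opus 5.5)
This is the heroes theorem of Berger et al., quoted from~\cite{hero}; I would reproduce the structure of their proof. The argument has two directions: that every hero has the stated recursive form, and that every tournament of that form is a hero.

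\textbf{Necessity.} Three constructions each rule out a family of candidates.
\begin{itemize}[itemsep=0pt,topsep=2pt]
\item Random tournaments have large dichromatic number and no large transitive subtournaments. Hence every hero is a subtournament of some bounded-size tournament, which gives nothing strong by itself.
\item A specific tournament with large dichromatic number in which every strong subtournament has a special shape. Here I would use iterated ``$\Delta$-substitutions'', or tournaments ordered so that backward arcs form a bounded structure. This shows that every strong component of a hero is of the form $\Delta(1,k,H')$ or $\Delta(H',k,1)$.
\item Substitution shows that if $H=H_1\Ra H_2$ is a hero then $H_1,H_2$ are heroes, since being a hero is closed under subtournaments.
\end{itemize}
Combining these, a hero decomposes along its strong components. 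Each strong component must be $\Delta(1,k,H')$ or $\Delta(H',k,1)$ with $H'$ a hero. The key example is a ``backedge graph'' construction: order the vertices, take backward arcs forming a graph with large chromatic number but controlled structure, and verify which strong tournaments embed.

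\textbf{Sufficiency.} I would argue by induction.
\begin{itemize}[itemsep=0pt,topsep=2pt]
\item For $H_1\Ra H_2$: take an $H$-free tournament $T$ with huge $\dic$. Find a vertex set $X$ with $\dic(T[X])>c_{H_1}$ whose common out-neighbourhood still has large dichromatic number. This is done by averaging over a partition into transitive or low-$\dic$ pieces, using that $\dic$ is subadditive over unions. Such an $X$ yields $H_1$ inside $X$ and $H_2$ in its out-neighbourhood.
\item For $\Delta(1,k,H')$: this is the heart of the theorem. I would use the notion of ``$(c,k)$-jewel chains'' from~\cite{hero}. In an $H$-free tournament of huge $\dic$, take a minimal-counterexample setting and consider a sequence of disjoint ``jewels'' (copies of $TT_k$ or low-$\dic$ sets). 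Order them so that most arcs go forward. The backward arcs must then generate $\Delta(1,k,H')$ whenever some vertex's in-neighbourhood among later jewels has large $\dic$. Otherwise, one colours greedily with a bounded number of colours, yielding a contradiction.
\end{itemize}

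\textbf{Main obstacle.} The main obstacle is this last step: showing that $\Delta(1,k,H')$-freeness forces bounded $\dic$, which requires the delicate jewel-chain and ordering argument. I would follow the original paper's strategy there rather than reinvent it.
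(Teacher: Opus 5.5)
The paper gives no proof of this statement. It is quoted from Berger et al.\ and used only as a black box (to get $\dic(T)\le h(k)$ for tournaments avoiding a fixed hero), so the citation you open with is exactly what the paper does. The sketch you add on top, however, would not work as a proof, and it fails at identifiable points.

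\textbf{Necessity.} The random-tournament bullet excludes nothing, since a large random tournament contains every fixed tournament. You then rely on a single large-$\dic$ family whose strong subtournaments all have a ``special shape'' forcing the form $\Delta(1,k,X)$ or $\Delta(X,k,1)$. No such family can exist: a tournament all of whose strong subtournaments on at least two vertices have this form is $2$-dicolourable, by induction. In $v\Ra K\Ra X\Ra v$, give $v$ colour $1$, $K$ colour $2$, and $X$ a $2$-dicolouring. Then $v$ has no out-neighbour of colour $1$, and every colour-$2$ in-neighbour of a vertex of $K$ lies in the transitive set $K$; the case $\Delta(X,k,1)$ is symmetric. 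If instead the ``special shape'' is something weaker, the $\Delta$ form does not follow from it alone. So necessity genuinely needs several distinct non-hero constructions, each excluding some strong obstruction, plus a structural argument that a strong tournament avoiding all of them has the $\Delta$ form. None of this is given.

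\textbf{The $H_1\Ra H_2$ step.} Take $H_1=K_1$, so $c_{H_1}=0$. Your step then asks for a vertex $v$ with $\dic(v^+)$ large in every tournament of huge $\dic$. You do not use $H$-freeness there, so this is precisely the local-to-global theorem of Harutyunyan, Le, Thomass\'e and Yu, which the paper quotes as a separate, important result. Subadditivity only yields $\dic(T)\le 1+\dic(v^+)+\dic(v^-)$, which is compatible with every out-neighbourhood having small $\dic$. So ``averaging'' cannot produce the required $X$.

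\textbf{Overall.} The $\Delta(1,k,H')$ case is also deferred wholesale to the original paper. The proposal is therefore the citation plus an outline whose own steps are either vacuous or do not hold. Either cite the theorem outright, as the paper does, or supply the missing ideas at these points.
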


\subsection{Acyclic dicolouring and main result}

Let $D$ be a digraph. An \emph{acyclic $k$-dicolouring} of $D$ is a $k$-dicolouring with colour classes $V_1, \dots, V_k$ such that the subdigraph $D[V_i,V_j]$ is acyclic for all $i,j \in [k]$. The \emph{acyclic dichromatic number} $\dichia(D)$ of $D$ is the least integer $k$ such that $D$ admits an acyclic $k$-dicolouring. When $X \subseteq V(D)$ is a set of vertices and $D$ is clear from the context, we may write $\dichia(X)$ in place of $\dichia(D[X])$. A tournament $H$ is a \emph{champion} if there exists an integer $c_H$ such that every $H$-free tournament has acyclic dichromatic number at most $c_H$. The acyclic dichromatic number upper-bounds the dichromatic number, so every champion is a hero, but the converse is not true. The following conjecture was posed in~\cite{bj25ADN}.

\begin{conjecture}[Bang-Jensen, Picasarri-Arrieta, and Yeo \cite{bj25ADN}]
    \label{conj:champion}
    A tournament $H$ is a champion if and only if $H$ is isomorphic to a subtournament of $TT_k \Ra (\Delta(1,1,k) \Ra TT_k)$ for some integer $k \ge 1$.
\end{conjecture}

In support of their conjecture, Bang-Jensen {\it et al.}~proved that the forward implication holds ({\it i.e.}~every champion has this form), and that the tournaments $\Delta(1,1,k)$ for $k \ge 1$, and $\Delta(1,1,1) \Ra TT_1$ are champions. We  confirm their conjecture.

\begin{restatable}{theorem}{main}\label{thm:main}
    For every integer $k \ge 1$, the tournament $TT_k \Rightarrow (\Delta(1,1,k) \Ra TT_k)$ is a champion.
\end{restatable}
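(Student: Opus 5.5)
Let $H_k = TT_k \Ra (\Delta(1,1,k) \Ra TT_k)$. The plan is to show that every $H_k$-free tournament has $\dichia$ bounded by a function of $k$, and to get there by two applications of a single ``adding a transitive block'' lemma, starting from the known fact (Bang-Jensen \emph{et al.}) that $\Delta(1,1,k)$ is a champion.

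\textbf{Key lemma.} If $H$ is a champion, then $TT_k \Ra H$ is a champion, and by reversing all arcs so is $H \Ra TT_k$. Applying it first to $\Delta(1,1,k)$ gives $\Delta(1,1,k) \Ra TT_k$. Applying it again gives $H_k$. To avoid having to prove the general local-to-global statement, I would use the extra structure here: every vertex of the tournament has bounded $\dichia$ on its out-neighbourhood, and the target $H$ itself has a prescribed shape.

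\textbf{Step 1 (a dominating transitive set).} Let $T$ be $(TT_k \Ra H)$-free. Take a maximal family of pairwise disjoint copies of $TT_k$ in $T$, or better, a greedy dominating structure as in the hero proofs. For any copy $X$ of $TT_k$, the common out-neighbourhood $\bigcap_{x\in X} x^+$ is $H$-free, so its $\dichia$ is at most $c_H$. I would then follow the Berger \emph{et al.} approach of building a ``jewel chain'' or ``$\alpha$-partition'': order the vertices to minimise backward arcs, and cut the order into consecutive intervals $V_1,\dots,V_m$ such that each $V_i$ has bounded $\dichia$ and backward arcs only go between neighbouring intervals.

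\textbf{Step 2 (colouring across intervals).} Once such an ordered partition is obtained, I would colour $V_i$ with a palette depending on $i \bmod 3$. The hypothesis that $H$ is a champion bounds each piece. Then any cycle using two colours must live inside a pair of intervals $V_i \cup V_{i+1}$, because arcs between non-adjacent intervals all go forward. It therefore remains to bound $\dichia(V_i\cup V_{i+1})$ in the bipartite sense. Here the $TT_k$ block gives control: a two-coloured cycle crossing backward between $V_{i+1}$ and $V_i$ yields, via the forward-dense structure, $k$ vertices of an earlier interval dominating a copy of $H$.

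\textbf{Main obstacle.} The hard step is Step 1 for acyclic colourings. In the hero setting, backward arcs between distant parts are harmless for dichromatic number but not for $\dichia$, since a single two-coloured cycle between two colour classes is forbidden. So I expect to need a strengthened partition in which backward arcs between distant intervals are absent, not just sparse. My first attempt would be to use minimum-feedback orderings and argue that a backward arc spanning far produces many disjoint $TT_k$'s all dominating a large set of bounded-below $\dichia$, which contains $H$. If needed, I would use the specific form $\Delta(1,1,k)$, where triangles give short cycles, to make this argument concrete.
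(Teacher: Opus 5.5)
\textbf{There is a genuine gap.} Your ``key lemma'' carries the entire difficulty, and Steps 1--2 do not prove it. The key lemma follows at once from the local-to-global property (Theorem~\ref{thm:localtoglobal}): in a $(TT_1 \Ra H)$-free tournament every out-neighbourhood is $H$-free, so it has $\dichia \le c_H$, and you induct on $k$. So you are in effect trying to prove a local-to-global statement without the tool the paper uses for it; in the paper that property is a consequence of the dimatching theorem.

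Step 1 gives no mechanism for this. The only sets you know to have bounded $\dichia$ are common out-neighbourhoods of copies of $TT_k$. Nothing forces the consecutive intervals of a minimum-feedback ordering to be $H$-free or to have bounded $\dichia$. With a single interval, asking for bounded $\dichia$ is exactly the conclusion. The Berger et al.\ machinery controls $\dic$, and that is no help here. As the paper observes, all the difficulty already lives in tournaments with $\dic \le 2$, that is, in the bipartite tournament between two transitive sets. There, hero-type ordering or jewel arguments yield nothing, since $\dic$ is already bounded.

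Step 2 has the same problem at the local level. A two-coloured cycle between $V_i$ and $V_{i+1}$ is a single short cycle, typically an alternating $C_4$. It cannot by itself produce $k$ vertices dominating a copy of $H$. Two pieces of bounded $\dichia$ can have huge $\dichia$ together: two transitive sets with a dimatching of size $m$ between them have $\dichia \ge \sqrt m$.

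The missing idea is the obstruction itself: large $\dichia$ forces a large dimatching. The paper gets this in four moves:
\begin{enumerate}
\item Take a bounded dicolouring, which exists because $TT_k \Ra (\Delta(1,1,k) \Ra TT_k)$ is a hero.
\item Reduce via Corollary~\ref{cor:cover} to the bipartite tournaments $T[T_i,T_j]$.
\item Observe that an acyclic colouring there amounts to a partition in which every pair of parts is $2K_2$-free in the associated bipartite graph.
\item Apply Atminas's theorem on induced matchings and co-matchings.
\end{enumerate}
Lemma~\ref{lem:matching_to_champion} then extracts $TT_k \Ra (\Delta(1,1,k) \Ra TT_k)$ directly from a dimatching of size $2^{2k}+2^k+4k$, by an in-degree count plus Erd\H{o}s--Moser. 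No iteration starting from $\Delta(1,1,k)$ is needed.

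The appendix's second proof does use an interval and parity-of-layers colouring close in spirit to your Step 2, but it applies only inside the bipartite tournament $T[A,B]$. There the intervals come from the switch points of vertices of $A$ along the transitive order of $B$. The pieces are bounded by an induction on dimatching sizes and on the number of switches. That is exactly the ingredient your outline lacks.
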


 The key notion to prove this theorem is the structure called dimatching. A \emph{dimatching} is a set of pairwise disjoint arcs $\{a_1b_1, \dots, a_kb_k\}$ such that for each $i,j \in [k]$, we have $a_i \ra b_j$ when $i=j$, and $a_i \la b_j$ when $i \ne j$. In this case, we say that $a_i$ and $b_i$ are \emph{matched} together, and given two vertex sets $A$ and $B$, we say that this dimatching goes from $A$ to $B$ if $a_i \in A$ and $b_i \in B$ for all $i \in [k]$. 

Bang-Jensen {\it et al.}~\cite{bj25ADN} introduced this structure to obtain 2-dicolourable tournaments with large acyclic dichromatic number. They show that any oriented graph containing a dimatching of size $k$ has acyclic dichromatic number at least $\sqrt k$. We will prove (Lemma \ref{lem:matching_to_champion}) that if a tournament contains a sufficiently large dimatching, it contains any champion. Thus Theorem \ref{thm:main} is a consequence of the following result, which we will see follows itself easily from a previous result of Atminas~\cite{ATMINAS2022113089} on bipartite graphs.

\begin{restatable}{theorem}{dimatching}\label{thm:dimatching}
    There exists a function $f \colon \mathbb N \ra \mathbb N$ such that for all integers $k \ge 1$, every tournament with acyclic dichromatic number at least $f(k)$ contains a dimatching of size $k$.
\end{restatable}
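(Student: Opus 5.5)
The plan is to turn the statement into a question about bipartite graphs and then apply Atminas' theorem~\cite{ATMINAS2022113089}. Given a tournament $T$, I would build the bipartite graph $G$ with two copies $V'=\{v' : v\in V(T)\}$ and $V''=\{v'':v\in V(T)\}$ of $V(T)$. The edges are the pairs $u'v''$ with $u\ra v$.

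The basic link is the following. Let $X,Y\subseteq V(T)$ be disjoint. The oriented complete bipartite graph $T[X,Y]$ contains a directed cycle if and only if it contains a directed $4$-cycle $x_1\ra y_1\ra x_2\ra y_2\ra x_1$. This holds because a shortest cycle in a bipartite tournament has length $4$. In terms of the bipartite graph of forward arcs from $X$ to $Y$, such a $4$-cycle is exactly an induced $2K_2$. So $T[X,Y]$ is acyclic if and only if that forward graph is a chain graph.

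Similarly, large obstructions in $G$ become dimatchings in $T$:
\begin{itemize}
\item An induced matching $\{u_i'v_i''\}$ of $G$ gives $u_i\ra v_i$ and, for $i\ne j$, either $v_j\ra u_i$ or $u_i=v_j$.
\item An induced co-matching (the bipartite complement of a perfect matching) gives $v_i\ra u_i$, unless $u_i=v_i$, and $u_j\ra v_i$ for $i\neq j$.
\end{itemize}
In both cases, the only defects are coincidences $u_i=v_j$. The indices form a conflict graph of maximum degree at most $2$, since the $u_i$ are distinct and the $v_i$ are distinct. Passing to an independent set in this conflict graph yields a genuine dimatching of size about a third of the original. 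The dimatching goes from the $u$'s to the $v$'s in the first case, and from the $v$'s to the $u$'s in the second. So it suffices to show that a tournament with no dimatching of size $k$ has bounded $\dichia$. We may then assume $G$ has no induced matching and no induced co-matching of size $3k$.

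Next I invoke Atminas' result. A bipartite graph with no large induced matching and no large induced co-matching admits a partition of each side into at most $g(k)$ parts $P'_1,\dots$ and $P''_1,\dots$ such that between every part on one side and every part on the other side the graph is a chain graph ($2K_2$-free).

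I then colour each vertex $v\in V(T)$ by the pair $(\alpha,\beta)$ with $v'\in P'_\alpha$ and $v''\in P''_\beta$. This uses at most $g(k)^2$ colours. I may refine this by a bounded number of extra colours if needed. Let $X,Y$ be two colour classes. Every arc from $X$ to $Y$ corresponds to an edge of $G$ between one fixed $V'$-part and one fixed $V''$-part, and likewise for arcs from $Y$ to $X$. By the equivalence above, a directed $4$-cycle in $T[X,Y]$ would produce an induced $2K_2$ in a single part pair, which is impossible. Hence $T[X,Y]$ is acyclic for $X\ne Y$.

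The step I expect to be the main obstacle is acyclicity inside a single colour class $X$, and the case where the $4$-cycle uses arcs in both directions. Note that the edges $x_1'y_1''$ and $x_2'y_2''$ do lie in one part pair. However, the non-edges $x_1'y_2''$ and $x_2'y_1''$ must also be read in the same pair, and this is where the colouring by pairs $(\alpha,\beta)$ is used.

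Inside $X$, a directed cycle in a tournament yields a directed triangle $a\ra b\ra c\ra a$ with $a,b,c\in X$. All of $a',b',c'$ lie in $P'_\alpha$ and all of $a'',b'',c''$ lie in $P''_\beta$. The edges $a'b''$ and $b'c''$ are present, and the non-edges $a'c''$ and $b'a''$ are absent (both since $c\ra a$ and $a\ra b$ respectively). Hence $\{a'b'',b'c''\}$ is an induced $2K_2$ inside the pair $(P'_\alpha,P''_\beta)$, a contradiction. So each colour class is acyclic, and $f(k)=g(3k)^2$, with possibly a small extra factor, works.

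If Atminas' theorem is only stated for a fixed bipartition rather than with partitions of both sides, I would first try to apply it verbatim to $G$ with sides $V',V''$. I expect exactly this form (bounded "chain partition" unless a large induced matching or co-matching exists) to be what is available.
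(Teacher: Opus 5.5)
Your proof is correct, and it takes a genuinely different route from the paper's.

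The paper starts from Theorem~\ref{thm:heroes}. The tournaments $H_k=\Delta(1,1,H_{k-1})$ contain dimatchings of size $k$ and are heroes, so a tournament with no dimatching of size $k$ splits into at most $h(k)$ transitive parts. Corollary~\ref{cor:cover} then reduces everything to the bipartite tournaments $T[T_i,T_j]$. Atminas' theorem is applied to the graph of forward arcs between two disjoint transitive sets. There, induced matchings and co-matchings are exactly dimatchings, and acyclicity inside a colour class is automatic.

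You instead apply Atminas' theorem once, to the bipartite double $G$ on $V'\cup V''$, and colour by pairs $(\alpha,\beta)$. Since $G$ has a fixed bipartition, the theorem applies verbatim, so your closing worry is unnecessary. This costs two verifications the paper never needs:
\begin{itemize}
\item converting induced (co-)matchings of $G$ into dimatchings despite coincidences $u_i=v_j$;
\item showing each colour class is acyclic. Your triangle argument does this neatly, because $b'b''$ is automatically a non-edge.
\end{itemize}
In return you gain three things:
\begin{itemize}
\item you bypass the hero characterization, Lemma~\ref{lem:remove_induced} and Corollary~\ref{cor:cover} altogether;
\item you recover, as a by-product, that tournaments without a dimatching of size $k$ have bounded dichromatic number;
\item you get $\dichia(T)\le f_{\mathrm A}(3k,k+1)^2$, with $f_{\mathrm A}$ Atminas' function, rather than a bound exponential in $h(k)^2$.
\end{itemize}
The paper's route, for its part, isolates the reduction to $2$-dicolourable tournaments, which it reuses in the appendix.

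There are two small slips to fix.
\begin{itemize}
\item In the triangle argument, the cross pairs of the $2K_2$ $\{a'b'',b'c''\}$ are $a'c''$ and $b'b''$, not $b'a''$. Since $T$ has no loops, $b'b''$ is a non-edge and the conclusion stands.
\item In the co-matching case, the defects are $u_i=v_i$, which are not edges of your conflict graph. However, at most one index can have this defect: $u_i=v_i$ and $u_j=v_j$ for $i\ne j$ would force both $u_i\ra u_j$ and $u_j\ra u_i$.
\end{itemize}
Finally, no extra colours are needed anywhere: $f_{\mathrm A}(3k,k+1)^2$ colours already suffice.
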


Using this result, it is easy to prove another conjecture from Bang-Jensen \textit{et al.}~\cite{bj25ADN} which states that the acyclic dichromatic number satisfies a local-to-global property in tournaments, analogous to the result of Harutyunyan et al. for dichromatic number of tournaments \cite{HARUTYUNYAN2019166}.

\begin{restatable}{theorem}{localtoglobal}\label{thm:localtoglobal}
    There exists a function $g : \mathbb N \ra \mathbb N$ such that every tournament $T$ satisfies
    $$
        \dichia(T) \le \max_{v \in V(T)} g \big( \dichia(v^+) \big).
    $$
\end{restatable}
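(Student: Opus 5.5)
The plan is to combine Theorem~\ref{thm:dimatching} with the classical local-to-global theorem of Harutyunyan, Le, Thomassé and Yu for the dichromatic number. Let $c = \max_{v} \dichia(v^+)$. Since $\dic \le \dichia$, every out-neighbourhood has dichromatic number at most $c$. So by \cite{HARUTYUNYAN2019166}, $\dic(T) \le h(c)$ for some function $h$.

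The remaining task is to upgrade a bounded dicolouring to a bounded acyclic dicolouring. Fix a $h(c)$-dicolouring $V_1,\dots,V_m$ with $m \le h(c)$. It suffices to bound $\dichia(T[V_i \cup V_j])$ for each pair $i,j$ by some $r(c)$. We then take the product colouring: each vertex $v \in V_i$ gets the tuple recording, for every $j \ne i$, its colour in a fixed acyclic colouring of $T[V_i\cup V_j]$, together with $i$.

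Consider two product classes $X, Y$ with $X \subseteq V_i$ and $Y \subseteq V_j$. If $i=j$, then $X \cup Y \subseteq V_i$ induces an acyclic tournament, so $T[X,Y]$ is acyclic. If $i\neq j$, the sets $X$ and $Y$ lie in two classes of the acyclic colouring of $T[V_i\cup V_j]$, which are distinct because the colours are taken within $V_i$ and $V_j$ respectively. We may in fact make them distinct by using disjoint palettes for the two sides. Hence $T[X,Y]$ is acyclic. Each class is acyclic because it lies inside a single $V_i$. This gives $\dichia(T) \le m \cdot r(c)^{m}$.

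To bound $\dichia(V_i\cup V_j)$, we apply Theorem~\ref{thm:dimatching}. If $\dichia(V_i \cup V_j) \ge f(k)$, then $T[V_i\cup V_j]$ contains a dimatching $\{a_1b_1,\dots,a_kb_k\}$. We want a contradiction for $k$ large in terms of $c$: a large dimatching should force some out-neighbourhood with large $\dichia$. Take a vertex, say $b_1$. For $i\ne 1$ we have $b_1 \ra a_i$, so $b_1^+ \supseteq \{a_2,\dots,a_k\}$. The $b_i$'s are not necessarily in $b_1^+$, so I would first pass to a large sub-dimatching with good structure. By Ramsey's theorem on the tournament $T[\{b_1,\dots,b_k\}]$, extract a transitive subtournament of size $k'$ and reindex so that $b_1$ is its source. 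Then $b_1 \ra b_i$ and $b_1 \ra a_i$ for all $i \ge 2$, so $b_1^+$ contains the dimatching $\{a_ib_i : i\ge 2\}$ of size $k'-1$. By the result of Bang-Jensen et al.\ that a dimatching of size $s$ forces $\dichia \ge \sqrt{s}$, we get $c \ge \dichia(b_1^+) \ge \sqrt{k'-1}$. Choosing $k$ so that $k' - 1 > c^2$ gives the contradiction. Hence $r(c) < f(k)$ with $k = $ Ramsey number for transitive subtournaments of order $c^2+2$, roughly $2^{c^2+2}$.

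This argument actually bypasses the dicolouring step. Applying it to $T$ itself shows directly that $\dichia(T) < f(k)$, which is a function of $c$ alone, so I would simply use that and drop the first two steps. The only delicate points are the Ramsey extraction of a transitive set among the $b_i$, which is standard, and checking that a sub-dimatching is still a dimatching, which is immediate from the definition. So the main obstacle is really Theorem~\ref{thm:dimatching}, which we are allowed to assume.
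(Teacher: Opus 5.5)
Your final argument is correct and is essentially the paper's proof. You apply Theorem~\ref{thm:dimatching} to $T$ itself, pick a head $b$ of the dimatching that dominates many other heads so that $b^+$ contains a large sub-dimatching, and invoke the $\sqrt{s}$ lower bound of Bang-Jensen et al. The only differences are these: the paper finds such a vertex by the averaging fact that a tournament on $2n$ vertices has a vertex of out-degree at least $n$, so it only needs a dimatching of size $2(c+1)^2$; your Erd\H{o}s--Moser extraction also works but costs an exponential, and your preliminary detour through Harutyunyan et al.\ is unnecessary, as you noticed.
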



\section{Proofs}
This section contains three subsections,  devoted respectively to the proofs of Theorem~\ref{thm:main}, Theorem~\ref{thm:dimatching} and Theorem ~\ref{thm:localtoglobal}.
\begin{remark}
    We include in the appendix a second proof of Theorem~\ref{thm:main} which is not based on Theorem~\ref{thm:dimatching}, and thus not on the result of Atminas either. This alternative proof yields better upper bounds, and we believe the ideas developed there are of independent interest for future works on the acyclic dichromatic number.
\end{remark}

\subsection{Proof of Theorem \ref{thm:main}} 
\main*

As stated in the introduction, to deduce Theorem \ref{thm:main} from Theorem \ref{thm:dimatching}, it suffices to prove that one can find any champion in every tournament that contains a sufficiently large dimatching. For this, we make use of the following analogue of Ramsey's theorem for tournaments.

\begin{theorem}[Erd\H{o}s and Moser~\cite{Ramsey}]
    \label{thm:moser}
    For all integers $n \ge 0$, every tournament of order $2^{n}$ contains a transitive tournament of order $n+1$.
\end{theorem}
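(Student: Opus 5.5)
This is the classical Erdős–Moser argument, which I would carry out by induction on $n$, greedily peeling off a vertex of large degree. For $n=0$, a tournament of order $2^0=1$ is itself $TT_1$, so there is nothing to prove.

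For the inductive step, let $T$ be a tournament of order $2^{n}$ with $n\ge 1$. The total out-degree is $\binom{2^n}{2}$, so the average out-degree and the average in-degree are both $(2^n-1)/2$. Take any vertex $v$. Its out-degree and in-degree sum to $2^n-1$, which is odd, so one of them is at least $2^{n-1}$. Let $S$ denote $v^+$ if $|v^+|\ge 2^{n-1}$, and $v^-$ otherwise. In either case $|S|\ge 2^{n-1}$.

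Choose any $2^{n-1}$ vertices of $S$. By induction, the subtournament they induce contains a transitive tournament $TT_n$ on a vertex set $X$. Since $v$ dominates all of $X$ (when $S=v^+$) or is dominated by all of $X$ (when $S=v^-$), the set $X\cup\{v\}$ is acyclic. Indeed, $v$ is a source or a sink of $T[X\cup\{v\}]$, so any directed cycle there would have to lie inside the acyclic set $X$. Hence $T[X\cup\{v\}]\simeq TT_{n+1}$, which proves the statement.

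I expect no real obstacle here. The only point needing care is the parity observation that $2^n-1$ is odd, so $\max(|v^+|,|v^-|)\ge 2^{n-1}$ holds exactly, not just up to rounding. The induction hypothesis only requires order exactly $2^{n-1}$, and any larger tournament contains a subtournament of that order.
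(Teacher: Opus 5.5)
Your proof is correct. It is exactly the classical Erd\H{o}s--Moser argument: take any vertex $v$, note that one of $v^+$, $v^-$ has at least $2^{n-1}$ vertices, recurse inside it, and append $v$ as a source or sink. The paper gives no proof of its own and simply cites this result. One minor point: the remark about average degrees is unnecessary, since $|v^+|+|v^-|=2^n-1$ alone forces $\max(|v^+|,|v^-|)\ge 2^{n-1}$.
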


\begin{lemma}\label{lem:matching_to_champion}
    Let $T$ be a tournament and $k \ge 1$ be an integer. If $T$ contains a dimatching of size $2^{2k}+2^k+4k$, then T contains a subtournament isomorphic to $TT_k \Ra (\Delta(1,1,k) \Ra TT_k)$.
\end{lemma}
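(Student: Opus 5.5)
The plan is to build the target tournament directly from the dimatching. Write it as $\{a_1b_1,\dots,a_Nb_N\}$ with $N=2^{2k}+2^k+4k$. The only arcs forced by the definition are $a_i\ra b_i$ and $b_j\ra a_i$ for $i\neq j$. Arcs inside $\{a_i\}$ and inside $\{b_i\}$ are arbitrary, and Theorem~\ref{thm:moser} will be used to tame them.

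\textbf{Target shape.} All $b$'s used come from distinct indices and are made transitive. For $x$ we take some $a_{i_0}$ and for $y$ its partner $b_{i_0}$. The front $TT_k$ is $k$ further $b$'s, and the set $Z$ is $k$ further $b$'s. The back $TT_k$ is $k$ vertices $a_j$ with fresh indices $j$ (their partners $b_j$ are unused).

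\textbf{Arcs forced by the dimatching.}
\begin{itemize}
\item Every used $b_j$ with $j\neq i_0$ dominates $a_{i_0}$. This gives both $Z\Ra a_{i_0}$ and front $\Ra a_{i_0}$.
\item Every fresh $a_j$ is dominated by all used $b$'s, since their indices differ from $j$.
\item We have $a_{i_0}\ra b_{i_0}$.
\end{itemize}

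\textbf{Arcs we must arrange ourselves.}
\begin{enumerate}
\item[(i)] The $b$'s used (front, $b_{i_0}$, $Z$) must form a transitive tournament in the order front, then $b_{i_0}$, then $Z$.
\item[(ii)] The back $a$'s must be transitive.
\item[(iii)] We need $a_{i_0}\ra a_j$ for every back vertex $a_j$.
\end{enumerate}

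\textbf{Ordering of steps.} First handle (ii) and (iii). Take a set $F$ of $2^k$ indices and apply Theorem~\ref{thm:moser} to $\{a_j : j\in F\}$. This yields a transitive set $a_{j_0}\ra a_{j_1}\ra\dots\ra a_{j_k}$ on $k+1$ vertices. Declare $i_0:=j_0$, its source, so that $a_{i_0}$ dominates the back set $\{a_{j_1},\dots,a_{j_k}\}$. This settles (ii) and (iii).

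Next handle (i). Consider the remaining $2^{2k}+4k$ indices outside $F$. Their $b$'s split into in-neighbours and out-neighbours of $b_{i_0}$, and we want $k$ transitive vertices on each side, all forming one transitive tournament together with $b_{i_0}$.

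\textbf{Main obstacle.} The obstacle is that $b_{i_0}$ is now fixed, so it cannot simply be placed in the middle of a transitive chain found by Ramsey. A first attempt would apply Theorem~\ref{thm:moser} to the larger side, and use the $4k$ slack indices to handle unbalance. That fails when one side has fewer than $k$ vertices.

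To fix this, I would reverse the roles: do Ramsey on the $b$'s first. Apply Theorem~\ref{thm:moser} to $2^{2k}$ of the $b$'s, over an index set disjoint from $F$, to obtain a transitive chain on $2k+1$ vertices. Take its middle vertex $b_m$ as $y$, the $k$ earlier vertices as the front, and the $k$ later vertices as $Z$. Then set $x=a_m$, which forces $i_0=m$.

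Now (iii) becomes the issue: we need $a_m\ra$ (back $TT_k$). Taking the back from the $a$'s of $2^k$ fresh indices and using Ramsey gives a transitive back set, but $a_m$ must dominate it. To get this, I would choose the $b$-chain inside the out-neighbourhood structure in advance. Among the fresh $a$'s, consider the out-neighbourhood of $a_m$ versus the in-neighbourhood. If $a_m$ has at least $2^{k-1}$ fresh out-neighbours we are done. Otherwise we use $4k$ extra indices to replace the central pair.

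The $4k$ term suggests a pigeonhole over the $2k+1$ candidate chain positions, each given spare indices. Verifying that this bookkeeping closes exactly with $N=2^{2k}+2^k+4k$ is the step I expect to be hardest.
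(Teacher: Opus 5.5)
Your target embedding is valid. Take $P$, $y$, $Z$ as a transitive chain of $2k+1$ heads with $y=b_m$ in the middle, take $x=a_m$, and take the back $TT_k$ among tails whose indices lie outside the chain. Then every required arc except $x \Ra Q$ and the transitivity conditions is forced by the dimatching, exactly as you list. This embedding is the arc-reversed mirror of the paper's, where $x$, $Z$ and the back $TT_k$ are tails and the front $TT_k$ consists of heads.

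However, the proposal stops at the only nontrivial point, so it does not prove the lemma. Once Theorem~\ref{thm:moser} hands you the chain of heads, you have no control over which index $m$ is in the middle. Nothing then guarantees that $a_m$ has $2^{k-1}$ out-neighbours among the fresh tails. You are right that if it does you are done, but your ``otherwise'' branch is not an argument. Replacing the central pair destroys the transitive chain you need for (i). No other position in a chain of length $2k+1$ has $k$ vertices on each side. The proposed pigeonhole over chain positions is never specified.

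The missing idea is to filter by degree \emph{before} applying Ramsey. A tournament has at most $2\ell-1$ vertices of out-degree less than $\ell$. Set $\ell=2^{k-1}+2k$.
\begin{enumerate}
\item Discard the at most $2^k+4k-1$ indices $i$ for which $a_i$ has fewer than $\ell$ out-neighbours in $T[A]$. At least $2^{2k}+1$ indices survive.
\item Apply Theorem~\ref{thm:moser} to the heads of $2^{2k}$ surviving indices to obtain your chain of $2k+1$ heads, with middle $b_m$.
\item The tail $a_m$ has at least $2^{k-1}+2k$ out-neighbours in $A$. At most $2k$ of them are matched to the other chain vertices, so at least $2^{k-1}$ have fresh indices.
\item Theorem~\ref{thm:moser} gives a $TT_k$ among these fresh out-neighbours, which serves as the back $TT_k$.
\end{enumerate}
This is exactly where the $2^k+4k$ slack in $N$ is spent. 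It is the paper's argument applied to the converse tournament; recall that $TT_k \Ra (\Delta(1,1,k)\Ra TT_k)$ is self-converse. The paper filters heads by in-degree in $T[B]$, runs Ramsey on the matched tails, and takes the front $TT_k$ among in-neighbours of $b$ that are unmatched to the chain.
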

\begin{proof}
    Let $M$ be a dimatching of size $2^{2k}+2^k+4k$ in $T$. Denote $A$ and $B$ the vertex sets of size $2^{2k}+2^k+4k$ such that $M$ goes from $A$ to $B$. Let $B' \subset B$ denote the vertices of $B$ with at least $2^{k-1}+2k$ in-neighbours in $B$, and let $A'$ be the vertices of $A$ matched with vertices in $B'$. 
    It is well known that, for every integer $\ell\geq 1$, every tournament contains at most $2\ell-1$ vertices of in-degree less than $\ell$  (see~\cite[Prop.~2.2.2]{bang2018}). In particular, for $\ell=2^{k-1}+2k$, we obtain that $|A'| = |B'| \geq 2^{2k}$.

    By Theorem~\ref{thm:moser},  there exists a subset $A'' \subset A'$ of size $2k+1$ that induces a transitive tournament in $T$. Let $a \in A''$ be the ``middle" vertex in this transitive tournament (that is, the unique vertex with $k$ in-neighbours and $k$ out-neighbours in $A''$) and let $b$ be the vertex matched to $a$. By definition of $B'$, the vertex $b$ has at least $2^{k-1}+2k$ in-neighbours in $B$. At least $2^{k-1}$ of them are not matched to any vertex in $A''$, so by Theorem~\ref{thm:moser}, there exists a set $B''\subseteq B$ consisting of $k$ in-neighbours of $b$ not matched to any vertex in $A''$, and that induces a transitive tournament in $T$. It is easy to see that $A'' \cup B'' \cup \{b\}$ induces a tournament isomorphic to $TT_k \Ra (\Delta(1,1,k) \Ra TT_k)$.
\end{proof}

\subsection{Proof of Theorem \ref{thm:dimatching}}
\dimatching*
One of the first observations (that was implicit in the work of Bang-Jensen {\it et al.}) is that in order to prove a bound on the acyclic chromatic number of a tournament, it is enough to prove it in the case where the vertex set can be partitioned into two transitive tournaments (in other words, when the tournament has dichromatic number $2$). We prove below two results that are stronger than this sole fact because we feel they might be of independent interest for future work on the acyclic dichromatic number. 

\begin{lemma}\label{lem:remove_induced}
    Let $D$ be a digraph and $H$ be an induced subdigraph of $D$, then
    $$\dichia(D) \leq \dichia \big( D \setminus E(H) \big) \cdot \big( \dichia(H)+1 \big).$$
\end{lemma}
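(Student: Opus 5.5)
The plan is to take the product of two colourings. Let $X = V(H)$, fix an acyclic $p$-dicolouring $\phi$ of $D \setminus E(H)$ with $p = \dichia(D\setminus E(H))$, and fix an acyclic $q$-dicolouring $\psi\colon X \to [q]$ of $H$ with $q = \dichia(H)$. Define $\theta$ on $V(D)$ by
$$\theta(v) = (\phi(v), 0) \text{ if } v \notin X, \qquad \theta(v) = (\phi(v), \psi(v)) \text{ if } v \in X.$$
This uses at most $p(q+1)$ colours. Write $V_{i,j} = \theta^{-1}((i,j))$. The extra colour $0$ for vertices outside $X$ is what separates $X$ from its complement, so that $\psi$ controls everything inside $X$. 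I will check that $\theta$ is an acyclic dicolouring of $D$.

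Two observations drive the checking. First, since $H$ is induced, every arc of $D$ with both ends in $X$ is an arc of $H$. Every other arc of $D$ is an arc of $D\setminus E(H)$. Second, for two distinct (hence disjoint) classes $V, W$, the digraph $D[V,W]$ contains only the arcs with one end in $V$ and the other in $W$.

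\emph{Single classes.} If $j \ge 1$, then $V_{i,j} \subseteq \psi^{-1}(j) \subseteq X$. Hence $D[V_{i,j}]$ is a subdigraph of $H[\psi^{-1}(j)]$, which is acyclic. If $j = 0$, then $V_{i,0}$ avoids $X$. So $D[V_{i,0}]$ is a subdigraph of $(D\setminus E(H))[\phi^{-1}(i)]$, which is acyclic.

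\emph{Pairs of distinct classes $V_{i,j}$ and $V_{i',j'}$.} There are two cases.
\begin{itemize}
\item If $j, j' \ge 1$, both classes lie in $X$, so all crossing arcs are arcs of $H$ between $\psi^{-1}(j)$ and $\psi^{-1}(j')$. These form a subdigraph of $H[\psi^{-1}(j),\psi^{-1}(j')]$, which is acyclic. This also covers $j = j'$, where it is an induced colour class of $\psi$.
\item If at least one of $j, j'$ is $0$, no crossing arc has both ends in $X$. So every crossing arc lies in $D\setminus E(H)$, and $D[V_{i,j},V_{i',j'}]$ is a subdigraph of $(D\setminus E(H))[\phi^{-1}(i),\phi^{-1}(i')]$. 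That digraph is acyclic, again including the case $i = i'$.
\end{itemize}

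Since subdigraphs of acyclic digraphs are acyclic, every case is settled. I do not expect any step to be a real obstacle. The only point requiring care is the mixed case, where one class lies in $X$ and the other outside it. There, the crossing arcs must avoid $E(H)$, which holds because each such arc has an endpoint outside $X$.
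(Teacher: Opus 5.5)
Your proof is correct and is essentially the paper's. The paper uses the same product colouring, $(\phi_D(v),\phi_H(v))$ on $V(H)$ and $(\phi_D(v),0)$ elsewhere, but argues by contradiction on a monochromatic or alternating cycle $C$ (which must use an arc of $H$, forcing every vertex of $C$ into $V(H)$ and hence $C$ into $H$), whereas you check each class and pair of classes directly, with your split on whether $j$ or $j'$ is $0$ playing the role of that step.
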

\begin{proof}
    Let $\phi_D$ and $\phi_H$ be any acyclic dicolourings of $D\setminus E(H)$ and $H$, respectively. 
    We show that the colouring $\phi^\star$ of $D$ defined below is an acyclic dicolouring of $D$, hence implying the result.
    \[
        \phi^\star \colon v \mapsto \left\{
            \begin{array}{ll}
                \big(\phi_D(v), \phi_H(v)\big) & \mbox{if $v\in V(H)$,} \\[0.1cm]
                \big(\phi_D(v), 0\big) & \mbox{otherwise.}
            \end{array}
        \right.   
    \]
    
    Suppose for contradiction that $D$, coloured with $\phi^\star$, contains a directed cycle $C$ which is monochromatic or bichromatic with alternating between two colours. Then the same holds with respect to $\phi_D$. By the choice of $\phi_D$, the cycle $C$ uses at least one arc $uv$ of $H$. Since $C$ is monochromatic or alternating between two colours, every vertex $w$ of $C$ has the same colour as $u$ or as $v$. Hence, every vertex $w$ of $C$ satisfies $\phi_H(w) \ne 0$, and since $H$ is an induced subdigraph of $D$, the cycle $C$ is in $H$. But then $C$ is monochromatic or  alternating  between two colours with regard to $\phi_H$, a contradiction.    
\end{proof}

By iterating the lemma above, we obtain the following useful corollary. Informally, it states that if the edges of a digraph $D$ can be covered (up to an acyclic set) by a bounded number of induced subdigraphs with bounded acyclic dichromatic number, then the acyclic dichromatic number of $D$ is also bounded.

\begin{corollary}\label{cor:cover}
    Let $D$ be a digraph and $D_1,\dots,D_\ell$ be a collection of induced subdigraphs of $D$ such that $D \setminus \bigcup_{i=1}^\ell A(D_i)$ is acyclic, then 
    \[
        \dichia(D) \leq \prod_{i=1}^\ell \Big( \dichia(D_i)+1 \Big).
    \]
\end{corollary}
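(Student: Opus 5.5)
We prove the corollary by induction on $\ell$, peeling off one $D_i$ at a time with Lemma~\ref{lem:remove_induced}. For $\ell = 0$ the hypothesis says that $D$ itself is acyclic. Colouring every vertex with a single colour then gives an acyclic $1$-dicolouring: the only colour class is $V(D)$, which is acyclic, and the only pair of classes is $D[V(D),V(D)] = D$, which is also acyclic. So $\dichia(D) \le 1$, which equals the empty product.

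For the inductive step, assume $\ell \ge 1$ and set $D' = D \setminus A(D_\ell)$, keeping all vertices. Apply Lemma~\ref{lem:remove_induced} with $H = D_\ell$, which is an induced subdigraph of $D$. This gives
\[
\dichia(D) \le \dichia(D') \cdot \big(\dichia(D_\ell)+1\big).
\]
It remains to apply the induction hypothesis to $D'$ with the family $D'_i = D'[V(D_i)]$ for $i \in [\ell-1]$. Each $D'_i$ is by construction an induced subdigraph of $D'$, and
\[
D' \setminus \bigcup_{i<\ell} A(D'_i) \subseteq D \setminus \bigcup_{i\le \ell} A(D_i),
\]
so the left-hand side is acyclic. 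Induction therefore yields $\dichia(D') \le \prod_{i<\ell}\big(\dichia(D'_i)+1\big)$.

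The only point needing care is this last step. The $D_i$ are induced in $D$, but after deleting the arcs of $D_\ell$ the digraph $D'[V(D_i)]$ may be $D_i$ minus some arcs, namely those shared with $D_\ell$. The bound is nonetheless stated in terms of $\dichia(D_i)$, so we must compare the two. For this we use monotonicity of $\dichia$ under arc deletion: $D'_i$ is a spanning subdigraph of $D_i$, so any acyclic dicolouring of $D_i$ is also one of $D'_i$. Indeed, each colour class and each bipartite piece $D'_i[V_a,V_b]$ is a subdigraph of the corresponding acyclic digraph in $D_i$, hence itself acyclic. Thus $\dichia(D'_i) \le \dichia(D_i)$.

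Combining the inequalities gives $\dichia(D) \le \prod_{i=1}^{\ell}\big(\dichia(D_i)+1\big)$, as claimed.
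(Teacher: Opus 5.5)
Your proof is correct and is the same approach as the paper, which obtains the corollary simply ``by iterating'' Lemma~\ref{lem:remove_induced}. Your induction spells out that iteration, including a detail the paper leaves implicit: after deleting the arcs of $D_\ell$, the remaining pieces must be taken as $D'[V(D_i)]$, and then related back to $D_i$ using monotonicity of $\dichia$ under arc deletion.
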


    Let now $H_k$ be the family of tournaments defined recursively by $H_0=K_1$,  and $H_k=\Delta(1,1,H_{k-1})$ for $k\geq 1$. It is easy to observe that $H_k$ contains a dimatching of size $k$, and  by Theorem \ref{thm:heroes} it is a hero. Hence there exists an integer $h(k)$ such that if a tournament $T$ does not contain a dimatching of size $k$, then $\dic (T) \le h(k)$, so we partition $V(T)$ into $c \le h(k)$ sets $T_1,\dots,T_c$, each inducing a transitive tournament on $T$. Remark that the subtournaments induced by two colour classes $\big\{T[V(T_i \cup T_j)]\big\}_{i,j \in [c]}$ together cover all arcs of $T$. Thus, by Corollary~\ref{cor:cover},
    \[
        \dichia(T) \le \prod_{i,j\in[c]} \big( \dichia(T_i \cup T_j)+1 \big).
    \]    
    Therefore, it suffices to bound the acyclic dichromatic number of $T[T_i \cup T_j]$ for each $i,j \in [c]$. Fix $i,j \in [c]$. Note that $T[T_i]$ and $T[T_j]$ are acyclic induced subtournaments of $T[T_i \cup T_j]$, and after removing their edges, we are left with the bipartite tournament $T[T_i,T_j]$, so by Corollary~\ref{cor:cover}, we have $\dichia(T_i \cup T_j) \le 4 \cdot \dichia \big( T[T_i,T_j] \big)$. Denote $A = T_i$, $B = T_j$, and $T' = T[T_i, T_j]$.

So we consider a digraph $D$ that is the orientation of a complete bipartite graph with sides $A$ and $B$. From $D$, let us define an undirected bipartite graph $G$ on the same vertex set with an edge between $a$ and $b$ if and only if $ab$ is an arc of $D$. Observe that a dimatching from $A$ to $B$ is then simply an induced matching in $G$, and a dimatching from $B$ to $A$ is an induced co-matching (the bipartite graph obtained by complementing the edges of a matching). Also, up to doubling the number of colours, note that we can assume that in an acyclic dicolouring of $D$, every colour class is either a subset of $A$ or a subset of $B$. It is easy to see that such a colouring is valid if an only if there is no directed $C_4$ between any two colour classes. This is equivalent to say that, between any two colour classes, the graph $G$ is $2K_2$-free, that is, it does not contain any induced matching of size $2$.

Using these observations, Theorem \ref{thm:dimatching} is a direct consequence of the following result of Atminas~\cite[Theorem~2.1]{ATMINAS2022113089}.
    
\begin{theorem}[Atminas~\cite{ATMINAS2022113089}]
For all integers $m$ and $n$, there exists an integer $f(n,m)$ such that for any bipartite $G = (A \cup B, E \subset A \times B)$:
\begin{itemize}[itemsep=0pt,topsep=2pt]
    \item either $G$ contains an induced matching of size $n$,
    \item or $G$ contains an induced co-matching of size $m$,
    \item or there are two partitions $A = A_1 \cup A_2 \cup \ldots \cup A_k$ and $B= B_1 \cup B_2 \cup \ldots \cup B_\ell$ with
$k,\ell \le f (n, m)$ such that $G[A_i, B_j]$ is $2K_2$-free for all $1 \le i \le k$ and $1 \le j \le \ell$.
\end{itemize}
\end{theorem}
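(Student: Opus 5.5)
Since the statement is a cited result of Atminas, I would reprove it by a Ramsey-type argument on the structure of induced $2K_2$'s. The starting point is that a bipartite graph is $2K_2$-free exactly when the neighbourhoods of its $A$-side are totally ordered by inclusion; equivalently, it is a chain graph. So the third outcome says that $A$ and $B$ split into a bounded number of pieces, each pair of which is a chain graph.

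I would argue by contradiction and build many disjoint induced $2K_2$'s greedily. Fix $n,m$ and suppose $G$ has none of the three outcomes, for a huge value of $f$. Consider a maximal family $\mathcal F=\{(a_i,b_i,a'_i,b'_i)\}_{i\le N}$ of vertex-disjoint induced $2K_2$'s, with edges $a_ib_i$ and $a'_ib'_i$. I would aim to show two things.
\begin{itemize}[itemsep=0pt,topsep=2pt]
\item[(i)] If $N$ is bounded, then the adjacency pattern of each vertex to $V(\mathcal F)$ partitions $A$ and $B$ into at most $2^{4N}$ classes. I would then refine these classes so that every pair of classes becomes $2K_2$-free, by induction on $N$. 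Maximality of $\mathcal F$ means every induced $2K_2$ meets $V(\mathcal F)$, which is what should make the induction go through. This step may need $\mathcal F$ to be chosen as a maximal family of a more robust kind, for instance $2K_2$'s between classes that are still ``bad''.
\item[(ii)] If $N$ is huge, then Ramsey's theorem produces the forbidden structures. Colour each pair $i<j$ by the $2^{8}$-valued adjacency pattern between the two $2K_2$'s and pass to a homogeneous subsequence of length $R$. Because the sequence is homogeneous, the adjacencies between $\{a_i,a'_i\}$ and $\{b_j,b'_j\}$ depend only on whether $i<j$ or $i>j$.
\end{itemize}

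The core of step (ii) is a case analysis over the homogeneous types. If $\{a_i\}$ is non-adjacent to $\{b_j\}$ for both $i<j$ and $i>j$, then the pairs $a_ib_i$ form an induced matching of size $R\ge n$. If they are adjacent in both directions, they form an induced co-matching. In the mixed cases (adjacent for $i<j$ only, or for $i>j$ only), the pairs $a_ib_i$ form a half graph. Half graphs have no large induced matching or co-matching, so I would then use the other edge $a'_ib'_i$ together with the cross adjacencies $a_i$--$b'_j$ and $a'_i$--$b_j$. The aim is to show that in every consistent pattern, one of the four ``diagonal'' pair families, or an interleaving of them (for example $a_{2i}$ with $b'_{2i+1}$), is homogeneous of the pure matching or pure co-matching type.

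The main obstacle is this mixed half-graph case, together with making step (i) rigorous. A single $2K_2$ per index carries enough information to rule out the case where all four families are half-graph-like with consistent orientation, but this has to be checked carefully. It is the reason the induced $2K_2$ (rather than a single edge) is the right gadget. If the direct check fails, my fallback is to strengthen the greedy family to one of long half graphs in which every two consecutive rungs form a $2K_2$, and to run the Ramsey argument on those.
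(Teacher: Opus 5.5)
\textbf{Genuine gap: step (ii) is false, so the dichotomy your proof rests on fails.} (The paper does not prove this statement; it quotes it from Atminas, so your argument has to stand on its own.)

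Step (i) is fine, and in fact trivial. By maximality of $\mathcal F$, the graph $G-V(\mathcal F)$ contains no induced $2K_2$ at all. Make each vertex of $V(\mathcal F)$ a singleton part and put the rest of $A$ (resp.\ $B$) into one part. This gives $2N+1$ parts per side with every $G[A_s,B_t]$ $2K_2$-free.

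Everything therefore rests on step (ii): the claim that many vertex-disjoint induced $2K_2$'s force an induced matching of size $n$ or an induced co-matching of size $m$. Here is a counterexample. Let $A=\{a_1,\dots,a_N,a'_1,\dots,a'_N\}$ and $B=\{b_1,\dots,b_N,b'_1,\dots,b'_N\}$. Join every $A$-vertex of index $i$ to every $B$-vertex of index $j>i$, and add the edges $a_ib_i$ and $a'_ib'_i$.
\begin{itemize}
\item Each $\{a_ib_i,a'_ib'_i\}$ is an induced $2K_2$, so $N$ is unbounded.
\item Take the parts $\{a_i\}$, $\{a'_i\}$, $\{b_j\}$, $\{b'_j\}$. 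The neighbourhood of each $A$-vertex inside each $B$-part is a final segment in index order, so all four pairs are chain graphs.
\item By pigeonhole, $G$ then has no induced matching or co-matching of size $5$.
\end{itemize}
The third outcome only forbids $2K_2$'s inside a single $G[A_s,B_t]$, whereas every $2K_2$ of your family meets all four parts. In your Ramsey language, this graph is exactly the homogeneous type where $\{a_i,a'_i\}$ is complete to $\{b_j,b'_j\}$ for $i<j$ and anticomplete for $i>j$. All diagonal families and all interleavings are then half graphs, which is precisely the case you hoped one $2K_2$ per index would exclude.

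There is also a separate slip. If $\{a_i\}$ is complete to $\{b_j\}$ in both directions you get $K_{R,R}$, not a co-matching, because the $a_ib_i$ are edges. Co-matchings can only come from the non-edges $a_ib'_i$ and $a'_ib_i$.

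The same construction defeats the whole strategy, not just this gadget. Take any fixed bipartite gadget $F$ and place $N$ copies so that the $A$-side of copy $i$ is complete to the $B$-side of copy $j$ for $i<j$ and anticomplete for $i>j$. This gives $N$ disjoint induced copies of $F$. Yet partitioning by position in $F$ yields $|A(F)|\cdot|B(F)|$ pairs, each of which is a chain graph.

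Your fallback fares no better. The graph with $a_i\sim b_j$ iff $j=i$ or $j\ge i+2$ has every two consecutive rungs forming an induced $2K_2$. Splitting both sides by parity of the index nevertheless gives four chain graphs.

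What is missing is the actual content of the theorem: a parameter that is bounded on graphs of the third type, and an argument that builds the partition while straddling $2K_2$'s remain abundant, by untangling half graphs that interact across parts.

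For comparison, the paper's appendix (Lemma~\ref{lem:main}) attacks this phenomenon in the bipartite-tournament setting with a global parameter: the number of switches of vertices of $A$ along an ordering of $B$. It combines this with the interval graph of first and last switches and induction on $(p,q,r)$. Even so, it only obtains a weaker statement with the extra outcome that every ordering of $B$ has a vertex switching $r$ times. That outcome is then converted into a copy of the champion using transitivity of $B$. So it is not a proof of the theorem above either.
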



\subsection{Proof of Theorem \ref{thm:localtoglobal}}
\localtoglobal*
    Let $f$ be a function that verifies Theorem \ref{thm:dimatching} and $T$ a tournament with acyclic dichromatic number at least $f(2k^2)$. By Theorem \ref{thm:dimatching}, $T$ contains a dimatching $\{a_ib_i\}_{i \in [2k^2]}$ of size $2k^2$. Denote $A,B \subset V(T)$ respectively the set of tails $\{a_i\}_{i \in [2k^2]}$ and the set of heads $\{b_i\}_{i \in [2k^2]}$ of the dimatching. Note that, for every $b_i \in B$, all vertices in $A$ are out-neighbours of $b_i$ except its match $a_i$.
    It is well-known that every tournament of order $2n$ contains a vertex with out-degree at least $n$ (see~\cite[Proposition~2.2.2]{bang2018}). In particular, there exists a vertex $v \in B$ with $k^2$ out-neighbours in $T[B]$. These $k^2$ vertices and their matches in $A$ induce a dimatching of size $k^2$ in $v^+$, which by \cite[Proposition~1]{bj25ADN} has acyclic dichromatic number $k$.

\bibliographystyle{plain}
\bibliography{biblio}

\appendix
\section{Appendix}

As mentioned in the introduction, we include here a second proof of Theorem \ref{thm:main}, which is not based on Theorem \ref{thm:dimatching}.

A graph $G = (V,E)$ is an \emph{interval graph} if there exists a family $(I_v)_{v \in V}$
of intervals of the real line such that, for every $u,v \in V$, we have $uv \in E$
if and only if $I_u \cap I_v \neq \emptyset$.
We make use of the following technical lemma about interval graphs. It captures the fact that, in a connected interval graph, starting a Breadth-First Search from the interval with smallest right endpoint yields layers that are naturally ordered along the real line.

\begin{lemma}
    \label{lem:interval_graphs}
    Let $G$ be an interval graph, where for each vertex $v\in V(G)$, the interval associated to $v$ is denoted by $I_v$. There exists a partition $L_0,\dots,L_h$ of $V$ such that:
    \begin{itemize}[itemsep=0pt, topsep=2pt]
        \item for each $0 \le i \le h$, either $|L_i|=1$, or $i \ge 1$ and there exists $u\in L_{i-1}$ such that $L_i \subseteq N_G(u)$; and 
        \item for all $0\leq i\leq j-2\leq h-2$, we have $\displaystyle \max \Big( \bigcup_{v\in L_i} I_v \Big) < \min \Big( \bigcup_{v\in L_j} I_v \Big)$.
    \end{itemize}
\end{lemma}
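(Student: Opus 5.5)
We construct the partition by a breadth-first search started from the interval whose right endpoint is smallest. We may assume $G$ is connected: otherwise its components are ordered along the line (intervals of different components are disjoint, so the unions of the components' intervals are pairwise disjoint intervals), and we handle them one after another, concatenating their layer sequences. For the second bullet at the junction we use that $\max$ of the earlier component is less than $\min$ of the later one.

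\textbf{Layers in the connected case.} Let $r$ be a vertex minimising the right endpoint of $I_r$, and set $L_0=\{r\}$. Let $L_i$ be the set of vertices at distance exactly $i$ from $r$ in $G$.

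\textbf{Key monotonicity property.} We prove by induction on $i$ that there is a vertex $u_i\in L_i$ such that $\max I_{u_i}=\max \bigcup_{v\in L_0\cup\dots\cup L_i} I_v=:R_i$. Moreover, every vertex of $L_{i+1}$ has an interval that meets $(R_{i-1},R_i]$ and lies to the right of $R_{i-1}$, that is, its left endpoint exceeds $R_{i-1}$.

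The argument runs as follows. The union $U_i=\bigcup_{j\le i}\bigcup_{v\in L_j}I_v$ is an interval, by connectivity of the BFS ball. Its left end is at least $\min$ over all intervals restricted to this ball, and its right end is $R_i$. A vertex $w$ outside the ball meets no interval of $L_0\cup\dots\cup L_{i-1}$. A vertex $w\in L_{i+1}$ meets some interval of $L_i$ but none of $L_{\le i-1}$, so $I_w$ lies entirely right of $U_{i-1}$ or entirely left of it. It cannot lie entirely left, because $\max I_r$ is the smallest right endpoint and $I_r\subseteq U_{i-1}$: indeed $\max I_w\ge \max I_r$, and $I_w$ being left of $U_{i-1}$ would force $I_w$ to end before $\min U_{i-1}\le\min I_r\le\max I_r$, a contradiction. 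Hence $\min I_w>R_{i-1}$.

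Since $w$ meets some $I_v$ with $v\in L_i$, and $\min I_w> R_{i-1}$, the point of intersection lies in $(R_{i-1},R_i]$. Because $U_i$ is an interval ending at $R_i$, the point $R_i$ lies in $I_{u_i}$ for the vertex $u_i$ attaining the maximum. Thus $\min I_w\le R_i$ implies that $I_w$ contains a point of $[\min I_w,\,\cdot\,]$, and together with the fact that $I_{u_i}\supseteq[\min I_w,R_i]$ it gives that $I_w$ meets $I_{u_i}$.

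\textbf{First bullet.} The argument above gives $L_{i+1}\subseteq N_G(u_i)$ with $u_i\in L_i$, and $|L_0|=1$, so the first bullet holds. The step showing $I_{u_i}\supseteq[\min I_w,R_i]$ is the delicate one and is where I expect the main obstacle. To handle it, I would show that $\min I_{u_i}\le R_{i-1}$ for $i\ge1$, which holds because $u_i\in L_i$ is adjacent to some vertex of $L_{i-1}$ whose interval lies within $U_{i-1}$. For $i=0$ it holds because $u_0=r$ and $w$ meets $I_r$ directly.

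\textbf{Second bullet.} Every interval of $L_j$ with $j\ge i+2$ has left endpoint greater than $R_{j-2}\ge R_i\ge \max\bigcup_{v\in L_i}I_v$, which is exactly the required inequality.
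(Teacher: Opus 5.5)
Your proof is correct and is essentially the paper's argument: BFS layers from the interval with the smallest right endpoint. Your $u_i$ (the vertex of the ball attaining $R_i$) plays the role of the paper's greedy chain $a_i$, and your observation that a BFS ball has an interval union replaces the paper's ordering property (if $u\prec v\prec w$ and $uw\in E(G)$ then $vw\in E(G)$). The only slip is that $u_i\in L_i$ can fail when $L_{i+1}=\emptyset$ (e.g.\ $I_r=[0,1]$, $I_v=[0.5,10]$, $I_w=[2,3]$); however, whenever $L_{i+1}\neq\emptyset$, your own remark that some $I_v$ with $v\in L_i$ reaches beyond $R_{i-1}$ gives $R_i>R_{i-1}$, so the maximum is attained in $L_i$, which is all you use.
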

\begin{proof}
    Let $\prec$ denote a total order on $V(G)$ defined, for $u,v \in V(G)$, by $u \prec v$ if $\max(I_u) < \max(I_v)$, and in case of equality between these maxima, we order $u$ and $v$ arbitrarily. It is classical and easy to observe that this order satisfies the following property for all $u,v,w \in V(G)$:
    \begin{equation}\label{intorder}
        \text{if $u \prec v \prec w$ and $uw$ is an edge of $G$, then $vw$ is an edge of $G$}.
    \end{equation}

    Let $a_0$ be the smallest vertex in $G$ (with respect to $\prec$), and as long as $a_i$ has neighbours larger than itself (with respect to $\prec$),  define $a_{i+1}$ as its largest neighbour. Let $a_d$ be the maximum such $a_i$. By~\eqref{intorder}, observe that every vertex between $a_{i-1}$ and $a_i$ is a neighbour of $a_i$, so all vertices smaller than $a_d$ belong to the connected component $C$ of $a_0$. Moreover, all vertices after $a_d$ belong to another connected component, as if a vertex after $a_d$ is adjacent to $C$, then it is adjacent to $a_d$ by \eqref{intorder}, but this contradicts the maximality of $a_d$. Observe that $a_i$ is at distance at most $i$ from $a_0$ (we prove below that the distance is exactly $i$), thus every vertex in $C$ is at distance at most $d+1$ from $a_0$. For each $0 \leq i \leq d+1$, define $L_i$ as the set of vertices at distance $i$ from $a_0$. Note that $L_0=\{a_0\}$.
    
     \begin{claim}
         For every $1 \le i \le d$, $a_i\in L_i$, $L_{i}\subseteq N_G(a_{i-1})$, $L_i\preceq a_{i}$ and if $i\geq 2$ then $a_{i-2}\prec L_i$.
    \end{claim}
    \begin{proofclaim}
        We proceed by induction on $i$. It is trivial for $i=1$ by definition of $a_1$ and $L_1$.  Assume now that $i\geq 2$ and that it is true for $j<i$. Assume first, for a contradiction, that $a_i \in L_j$ for $j<i$. By induction hypothesis $L_{j}\subseteq  N_G(a_{j-1})$, which contradicts the choice of $a_j$ since $a_j\prec a_{i}$. Since $a_i$ is at distance at most $i$ from $a_0$, we get $a_i\in L_i$.

        Now let $a$ be an arbitrary vertex in $L_i$. Let $j\geq 1$ be such that $a_{j-1}\prec a \preceq a_{j}$. Note that such a $j$ exists as every vertex $u\in C$ satisfies $a_0\preceq u \preceq a_d$.  By~\eqref{intorder}, $a$ is a neighbour of $a_j$. 
    
        If $j\leq i-2$, by induction $a_j\in L_j$ and $a$ is at distance at most $j+1<i$ from $a_0$, a contradiction. This shows that $a_{i-2}\prec L_i$.
        If $j=i-1$ then $a$ is a neighbour of $a_{i-1}$, as desired. If $j\geq i$, let $a'\in L_{i-1}$ be such that $a'a$ is an edge. By induction $a_{i-2}a'$ is an edge so by definition of $a_{i-1}$ we have $a'\preceq a_{i-1}\prec a$. Now by~\eqref{intorder} we have $a\in N_G(a_{i-1})$ as desired. This shows $L_i \subseteq N_G(a_{i-1})$.
    Now the definition of $a_i$ implies that $L_i \prec a_i$.
    \end{proofclaim}
    
    As observed above, the connected component of $a_0$ is precisely the set of vertices between $a_0$ and $a_d$, so if there are still remaining vertices, we can start again with the successor of $a_d$, and perform the same BFS to get the successive layers, and the sequence of partitions satisfies the requirement of the Lemma.  
\end{proof}


\subsection{Proof of Theorem~\ref{thm:main}}

This section is dedicated to the proof of Theorem~\ref{thm:main}. We first reduce to the case of bipartite tournaments ({\it i.e.} orientations of complete bipartite graphs), then we make use of the following notion.

Let $T=(A,B,E)$ be a bipartite tournament. Given an ordering $\prec$ of $B$, we say that a vertex $a\in A$ \emph{switches} at a vertex $b\in B$ if $b' \ra a \ra b$ or $b' \la a \la b$, where $b'$ is the predecessor of $b$ in $\prec$. Informally, it means that the orientations of the arcs at $a$ change, along the ordering of $B$, when arriving at $b$.

The idea of the proof is that, in a bipartite tournament $T$ with large acyclic dichromatic number, either we can find a vertex in $A$ that switches many times with respect to some ordering of $B$, or $T$ contains a large dimatching. In both cases, we can conclude that $T$ contains $TT_k\Ra (\Delta(1,1,k) \Ra TT_k)$.
The proof of Theorem~\ref{thm:main} thus relies on the following key lemma.

\begin{lemma}\label{lem:main}
    There exists a function $f\colon \mathbb{N}^3 \to \mathbb{N}$ such that, for all integers $p,q,r \ge 1$ and every bipartite tournament $T=(A,B,E)$ with  $\dichia(T) \geq f(p,q,r)$, at least one of the following holds:
    \begin{enumerate}[label=(\roman*)]
        \item for every ordering of $B$, some vertex of $A$ switches at least $r$ times; or 
        \label{enum_lemma_matching:item:1}
        \item $T$ contains a dimatching of size $p$ from $A$ to $B$; or
        \label{enum_lemma_matching:item:2}
        \item $T$ contains a dimatching of size $q$ from $B$ to $A$.
        \label{enum_lemma_matching:item:3}
    \end{enumerate}
\end{lemma}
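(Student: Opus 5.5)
We prove the contrapositive. Fix $p,q,r$ and a bipartite tournament $T=(A,B,E)$ together with an ordering $\prec$ of $B$ such that every vertex of $A$ switches fewer than $r$ times. Assume also that $T$ has no dimatching of size $p$ from $A$ to $B$ and none of size $q$ from $B$ to $A$. We must show $\dichia(T)$ is bounded by a function of $p,q,r$.

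As observed in the main text, up to doubling the number of colours we may use colour classes contained in $A$ or in $B$. Such a colouring is valid exactly when, for every pair of classes $X\subseteq A$, $Y\subseteq B$, there is no directed $C_4$. This means there are no $a,a'\in X$ and $b,b'\in Y$ with $b\in a^+\setminus a'^+$ and $b'\in a'^+\setminus a^+$. In other words, the traces $a^+\cap Y$ for $a\in X$ must form a chain under inclusion.

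\textbf{Step 1: reduce to a single interval.} Since $a$ switches fewer than $r$ times, $a^+\cap B$ is a union of at most $s=\lceil r/2\rceil+1$ intervals of $(B,\prec)$. I would first reduce to $s=1$. Colour each $a$ by a vector that records, for each $t\le s$, the colour obtained for its $t$-th interval in the one-interval problem, and take the product colouring. A bad $C_4$ inside one product class uses one interval of $a$ and one interval of $a'$. Comparing these component colourings should rule out the $C_4$, up to an extra factor coming from Corollary~\ref{cor:cover}. I am not fully sure this reduction is clean. A fallback is to cut $B$ into intervals and handle only the "boundary" vertices separately.

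\textbf{Step 2: the one-interval case.} Now each $a$ is represented by an interval $I_a$ of $(B,\prec)$ with $a^+=I_a$. Apply Lemma~\ref{lem:interval_graphs} to the interval graph on $\{I_a\}$ to get layers $L_0,\dots,L_h$. If $|i-j|\ge 2$, intervals in $L_i$ lie entirely to the left of those in $L_j$. So colour the layers with parity plus a bounded palette, and give $B$ colours according to the layer span it falls in. This confines every bad $C_4$ to two consecutive layers. Inside one layer $L_i\subseteq N(u)$, all intervals meet the interval $I_u$, so we can split each interval into its left and right parts relative to a point of $I_u$. We then look at left endpoints and right endpoints separately.

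\textbf{Step 3: extract dimatchings.} Two intervals $I_a,I_{a'}$ give a bad pair precisely when they are not nested, i.e.\ each has a private point relative to the other. Partition each layer into chains of the nesting order; by Dilworth's theorem, either there are few chains or there is a large antichain. A large antichain consists of pairwise non-nested intervals, ordered with increasing left and right endpoints. From it, taking every other interval and using the private points, I expect to build a dimatching from $A$ to $B$ of size about half the antichain. The dimatching from $B$ to $A$ (co-matching: each $a_i$ misses exactly $b_i$) should enter through the complementary structure, when the roles of in- and out-intervals are exchanged. That structure arises in Step 1, or when neighbourhoods are "almost everything minus a gap".

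The main obstacle is Step 3 combined with Step 1. It is not automatic that a long crossing chain of intervals yields private points giving a genuine induced matching. One must also control which $b$'s are shared by the two colour classes. If the antichain argument fails I would first try a Ramsey argument on pairs (disjoint versus crossing) to obtain either many disjoint intervals, which immediately give a dimatching $A\to B$, or many pairwise crossing ones, to be handled via the left/right split at a common point.
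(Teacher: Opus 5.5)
Your proposal has a genuine gap at Step 1, and it cannot be patched. The hypotheses of the lemma do not pass down to the one-interval subproblems, so those subproblems can have unbounded acyclic dichromatic number. Separately, a directed $C_4$ $a\ra b\ra a'\ra b'\ra a$ can use the $t$-th out-interval of $a$ and the $t'$-th out-interval of $a'$ with $t\neq t'$, so no component colouring detects it.

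Here is a concrete example. Let $B=\{x_1\prec\dots\prec x_N\}$, $A=\{a_1,\dots,a_N\}$, and $a_i^+=\{x_i\}\cup\{x_m : m\ge i+2\}$.
\begin{itemize}
\item Each $a_i$ switches at most $3$ times.
\item $T$ has no dimatching of size $3$ in either direction. Take indices $i_1<i_2<i_3$. In a dimatching from $A$ to $B$, the partner of $a_{i_3}$ would be some $x_m\in a_{i_3}^+$, forcing $m\ge i_3\ge i_1+2$. It would also have to lie in $a_{i_1}^-=\{x_m : m<i_1\}\cup\{x_{i_1+1}\}$, which is impossible. The same contradiction arises for the partner of $a_{i_1}$ in a dimatching from $B$ to $A$.
\end{itemize}
So the lemma with $p=q=3$, $r=4$ bounds $\dichia(T)$. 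Yet your first-interval subproblem is $a_i^+=\{x_i\}$, which is a dimatching of size $N$, so its acyclic dichromatic number is at least $\sqrt N$. Any scheme that colours the individual out-intervals separately therefore gives no bound. Your fallback of cutting $B$ into intervals is too vague to assess. Cutting only helps when the cut point lies inside the switching range of every vertex being handled, and organising exactly that is the content of the proof.

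Steps 2--3, the one-interval case, are the easy part. Your antichain claim there is false: the pairwise non-nested intervals $[i,i+N]$, $1\le i\le N$, all share a point and admit no dimatching of size $3$. Your Ramsey fallback, splitting pairwise-intersecting intervals at a common point so that the traces become chains, is the right repair. Note also that single intervals never give a dimatching from $B$ to $A$ of size $3$, so hypothesis (iii) plays no role there. This is a sign that the real difficulty lies elsewhere.

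The missing idea is the paper's induction on $p+q+r$, built around the span $[\first(a),\last(a)[$ from the first to the last switch of $a$:
\begin{itemize}
\item For a clique of spans, cutting $B$ at a point common to all of them leaves each vertex with fewer than $r-1$ switches on each side, so induction on $r$ applies.
\item Split $A$ according to its arcs at $\bm$ and $\bM$. A stable set of spans in $A^{++}$ (resp.\ $A^{--}$) yields a dimatching from $A$ to $B$ (resp.\ from $B$ to $A$) through the arcs between $s$ and $\first(s)$. Perfectness of interval graphs then gives a bounded clique cover.
\item For $A^{+-}$, the key trick is augmentation. A vertex $a\in b_1^-\cap b_2^+$ extends every dimatching from $b_1^+\cap b_2^-$ into $a^-$ by the arc $ab_1$, and every dimatching from $a^+$ into $b_1^+\cap b_2^-$ by the arc $b_2a$. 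This allows induction on $p$ or $q$.
\item This is combined with the layering of Lemma~\ref{lem:interval_graphs}, so that each layer is controlled by the neighbourhood of a single vertex.
\end{itemize}
None of these mechanisms appears in your proposal, and without them you have no way for the $B\to A$ hypothesis to enter.
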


\begin{proof}
    Let $p,q,r \ge 1$ be positive integers and $T=(A,B,E)$ a bipartite tournament such that none of~\ref{enum_lemma_matching:item:1}, \ref{enum_lemma_matching:item:2}, and~\ref{enum_lemma_matching:item:3} hold. We show that the acyclic dichromatic number of $T$ is bounded by some value $f(p,q,r)$ to be determined, which implies the desired result. We proceed by induction on $p+q+r$.
    
    If $\min\{p,q,r\}=1$, then we claim that $T$ has no directed cycle. Indeed, a vertex of $A$ on a directed cycle has at least one in-neighbour and one out-neighbour in $B$, and such a vertex switches at least once with regard to every ordering of $B$. Moreover, any arc from $A$ to $B$ and any arc from $B$ to $A$ forms a dimatching of size 1. Thus, if $\min\{p,q,r\}=1$, then $T$ is acyclic, and we can set $f(p,q,r) = 2$.

    Now suppose $p,q,r>1$. Since \ref{enum_lemma_matching:item:1} does not hold, there exists an ordering $\prec$ on $B$ such that every vertex of $A$ switches strictly less than $r$ times with respect to $\prec$. In the following, when speaking about vertices in $B$, we use the words smaller and larger to denote their relation with respect to $\prec$, and use the notations $[b,b']$ (or their variants with open brackets such as $[b,b'[$) to denote intervals in $B$ with regard to $\prec$. We denote respectively $\bm$ and $\bM$ the smallest and largest element of $B$. Let us partition $A$ into four sets depending on the orientation of the arcs at $\bm$ and $\bM$:
    \begin{itemize}[itemsep=0pt]
        \item $A^{++}=\bm^+ \cap \bM^+$,
        \item $A^{+-}=\bm^+\cap \bM^-$, 
        \item $A^{-+}=\bm^-\cap \bM^+$, and
        \item $A^{--}=\bm^-\cap \bM^-$. 
    \end{itemize}
    
    In what follows, we bound the acyclic dichromatic number of the bipartite tournament induced by each of these sets together with $B$. The result then follows from Corollary~\ref{cor:cover}. Remark that if a vertex $a \in A$ never switches then it lies in no directed cycle, so removing it does not decrease the acyclic dichromatic number, thus without loss of generality, we may assume that every vertex $a \in A$ switches at least one time. The key object in our proof is the interval graph that we now define. For each vertex $a\in A$, denote respectively $\first(a)$ and $\last(a)$ the first and last switches of $a$ in $B$, and let $I_a$ be the interval 
    \[
    I_a \coloneq [\first(a), \last(a)[\ = \{b\in B: \first(a)\preceq b \prec\last(a)\}.
    \]
    
    Let $G$ be the interval graph on $A$ defined by these intervals ({\it i.e.} $G$ has vertex set $A$, and for $a,a' \in A$, there is an edge $aa'$ in $G$ if and only if $I_a$ and $I_{a'}$ intersect).
    
    \begin{claim}\label{claim:clique}
        If $K \subseteq A$ is a clique of $G$, then $\dichia \big( T[K \cup B] \big) \le f(p,q,r-1)^2$.
    \end{claim}
    \begin{proofclaim}
        Let $a \in K$ be such that $f(a)$ is maximal. Then for $a' \in K$, we have $\first(a') \preceq f(a) \prec \last(a)$, where the first inequality holds by definition of $a$, and the second because otherwise $a$ and $a'$ are not adjacent in $G$. Hence each $a' \in K$ switches at least once in $[\bm,f(a)]$ and once in $]f(a);\bM]$, so $a'$ switches less than $r-1$ times in $[\bm,f(a)]$ and in $]f(a);\bM]$. By induction hypothesis, this implies
        \[
            \dichia \big(K \cup [\bm,f(a)]\big) \le f(p,q,r-1) -1 \text{~~~and~~~}
            \dichia \big(K \cup ]f(a),\bM]\big) \le f(p,q,r-1) - 1,
        \]
        and the claim follows by applying Corollary~\ref{cor:cover}. 
    \end{proofclaim}        
    
    \begin{claim}\label{claim:stable}
        A stable set in $A^{++}$ (resp. $A^{--}$)  has size less than $p$ (resp. $q$). 
    \end{claim}    
    \begin{proofclaim}
        Suppose $S$ is a stable set in $A^{++}$. Let $s,s' \in S$ be distinct vertices. Recall that $\bm \ra s$ and $\bM \ra s$, because $s \in A^{++}$, and as $\first(s)$ is the first switch of $s$, it holds that $s \ra \first(s)$. By definition of $I_{s'}$, we have $\first(s') \in I_{s'}$, thus $\first(s') \notin I_s$ as otherwise $s$ and $s'$ are adjacent in $G$. Hence either $\first(s') \prec \first(s)$ or $\last(s) \preceq \first(s')$. If $\first(s') \prec \first(s)$, then note that $\bm \ra s$ and $s$ does not switch in $[\bm, \first(s)[$, so $\first(s') \ra s$. If $\last(s) \preceq \first(s')$, then note that $\bM \ra s$ and $s$ does not switch in $[\last(s), \bM]$, so $\first(s') \ra s$. It follows that the arcs $\{s\ \first(s)\}_{s \in S}$ form a dimatching from $A$ to $B$, so $|S| < p$ by hypothesis.
        
        The proof for $A^{--}$ works symmetrically.
    \end{proofclaim}

    \begin{claim}
        $\dichia\big(A^{++}\cup A^{--} \cup  B\big)\leq \big(f(p,q,r-1)^2+1\big)^{p+q}$
    \end{claim}
    \begin{proofclaim}
        Since any interval graph is perfect, its vertices can be covered by a number of cliques equal to its stability number. Hence by Claim~\ref{claim:stable}, we can partition $A^{++}$ and $A^{--}$ into $p$ and $q$ cliques, respectively. Then the claim follows by combining Claim~\ref{claim:clique} with Corollary~\ref{cor:cover}.
    \end{proofclaim}

    It remains to bound $\dichia(A^{+-}\cup B)$ and $\dichia(A^{-+}\cup B)$. This time, $G$ may have unbounded stability number, so we use a different strategy based on the partition guaranteed by Lemma~\ref{lem:interval_graphs}. 

    \begin{claim}\label{claim:c4}
        Let $b_1$ and $b_2$ be vertices in $B$ such that $(b_1^- \cap b_2^+) \ne \emptyset$. Then
        \[
            \dichia \Big( (b_1^+ \cap b_2^-) \cup B \Big) \leq f(p-1,q,r) \cdot f(p,q-1,r).
        \]
    \end{claim}
    \begin{proofclaim}
        Let $a \in (b_1^- \cap b_2^+)$. Observe that every dimatching from $(b_1^+\cap b_2^-)$ to  $a^-$ can be augmented by the arc $ab_1$, so by induction hypothesis, it holds that
        \[
            \dichia \big( (b_1^+\cap b_2^-) \cup a^- \big)\leq f(p-1,q,r)-1.
        \]
        Similarly, every dimatching from  $a^+$ to $(b_1^+\cap b_2^-)$ can be augmented by the arc $b_2a$, and thus 
        \[
            \dichia \big( (b_1^+\cap b_2^-) \cup a^+ \big) \leq f(p,q-1,r)-1.
        \]
        The claim follows from these two inequalities together with Corollary~\ref{cor:cover}.
    \end{proofclaim} 

    The previous claim applied to $\bm$ and $\bM$ shows that if both $A^{+-}$ and $A^{-+}$ are both nonempty, then we are done. Hence (up to reversing all the arcs, and swapping $p$ and $q$) we can assume that $A^{-+}$ is empty. It remains to bound $\dichia(A^{+-}\cup B)$.  For better readability, we denote $G[A^{+-}]$ by $G^{+-}$.
    
    Let $a\in \Apm$. By definition of $\first(a)$ and using that $\bm \ra a$, we have $b \ra a$ for $b \prec \first(a)$, and $a \ra \first(a)$. Likewise, by definition of $\last(a)$ and using that $a \ra \bM$, we have $a \ra b$ for $b \succeq \last(a)$, and $b \ra a$ when $b$ is the predecessor of $\last(a)$ in $\prec$.
    
    \begin{claim}\label{claim:neigh}
        For every $a\in \Apm$,
        \[
            \dichia\big(N_{\Gpm}(a)\cup B\big) \leq \big( f(p,q,r-1)^2+1 \big)^2 \cdot \big( f(p-1,q,r) \cdot f(p,q-1,r)+1 \big).
        \]
    \end{claim}    
    \begin{proofclaim}
        Let $a \in A$, and denote $b \in B$ the vertex preceding $\last(a)$ in $\prec$. We define: 
        \begin{itemize}[itemsep=0pt]
            \item $A_1=\{a'\in \Apm : \first(a')\preceq \first(a) \prec \last(a')\}$, 
            \item $A_2=\{a'\in \Apm : \first(a') \prec \last(a) \preceq \last(a')\}$, and 
            \item $A_3=\{a'\in \Apm : \first(a) \prec \first(a')\prec\last(a')\prec\last(a)\}$. 
        \end{itemize}
        Note that $A_1$ and $A_2$ are cliques in $G$, since all intervals $I_{a'}$ for $a'\in A_1$ (resp. $a'\in A_2$) contain $\first(a)$ (resp. $b$). Therefore, using Claim~\ref{claim:clique} and Corollary~\ref{cor:cover}, we get
        \[
            \dichia \big( A_1 \cup B \big) \le f(p,q,r-1)^2
            \qquad\text{and}\qquad
            \dichia \big( A_2 \cup B \big) \le f(p,q,r-1)^2.
        \]
    
        For $a' \in A_3$, we have $\first(a) \ra a'$ as $\first(a) \prec \first(a')$, and $a'\ra b$ as $\last(a') \preceq b$, thus $A_3 \subseteq (f(a)^+ \cap b^-)$. Recall that $a\ra \first(a)$ and $b\ra a$, so $(f(a)^- \cap b^+) \ne \emptyset$. Hence, we can apply Claim \ref{claim:c4} to $\first(a)$ and $b$, which yields
        \[
            \dichia\big(A_3\cup B\big) \leq f(p-1,q,r) \cdot f(p,q-1,r).
        \]
        The claim follows from Corollary \ref{cor:cover} and the fact that $A_1 \cup A_2 \cup A_3 = N_{\Gpm}(a)$.
    \end{proofclaim}

    Let $L_0,\dots,L_h$ be the partition of $V(\Gpm)$ guaranteed by Lemma~\ref{lem:interval_graphs}. For each $0 \le i \le h$, denote $B_i = \bigcup_{a\in L_i} I_a$. By Lemma~\ref{lem:interval_graphs}, for $0 \le i \le j-2 \le h-2$, we have $\max(B_i) \prec \min(B_j)$ so $B_i$ and $B_j$ are disjoint. To make use of that, we split the layers in two sets according to their parity:
    \[
        L_{\rm even} = \bigcup_{i \rm\ even} L_i
        \qquad\text{and}\qquad
        L_{\rm odd} = \bigcup_{i \rm\ odd} L_i.
    \]
    Let $a \in L_i$ and $b \in B$. If $b \prec \min(B_i)$ then $b \prec \first(a)$ so $b \ra a$, and if $\max(B_i) \prec b$ then $\last(a) \preceq b$ so $a \ra b$. Is is easy to prove that that every directed cycle of $T[L_{\rm even}\cup B]$ is included in $T[L_i\cup B_i]$ for some $i$ (even). Hence, putting together acyclic dicolourings of $T[L_i \cup B_i]$ for each even index $i$, yields an acyclic dicolouring of $T[L_{\rm even}\cup B]$, thus
    \[
        \dichia(L_{\rm even} \cup B) \le \max_{i \text{ even}} \dichia(L_i \cup B_i).
    \]
    The same holds for $L_{\rm odd}$ and odd layers, so by Corollary~\ref{cor:cover} we get
    \[
        \dichia(A^{+-} \cup B) \le \Big( \max_{0 \le i \le h} \dichia(L_i\cup B_i)+1 \Big)^2.
    \]
    It thus suffices to bound $\dichia(L_i\cup B_i)$ for $0 \le i \le h$. By Lemma~\ref{lem:interval_graphs}, for each $0 \le i \le h$, either $|L_i| = 1$, or $i \ge 1$ and $L_i \subseteq N_{\Gpm}(u)$ for some $u \in L_{i-1}$. Hence by Claim~\ref{claim:neigh}, we have
    \[
        \dichia(L_i\cup B_i) \leq \big( f(p,q,r-1)^2+1 \big)^2 \cdot \big( f(p-1,q,r) \cdot f(p,q-1,r)+1 \big).
    \]
    This concludes the proof of the lemma.
\end{proof}

We now derive our Theorem~\ref{thm:main}, which we first restate here for convenience.

\main*

\begin{proof}
    Denote by $C_k$ the tournament $TT_k \Rightarrow (\Delta(1,1,k) \Ra TT_k)$, and let $T$ be a $C_k$-free tournament. The goal is to show that the acyclic dichromatic number of $T$ is bounded by some value $g(k)$. By Theorem~\ref{thm:heroes}, $T_k$ is a hero, so there exists an integer $h(k)$ such that $\dic (T) \le h(k)$. Hence, we can partition $T$ into $c \le h(k)$ transitive tournaments $T_1,\dots,T_c$. Remark that the subtournaments induced by two colour classes $\big\{T[T_i \cup T_j]\big\}_{i,j \in [c]}$ together cover all arcs of $T$. Thus, by Corollary~\ref{cor:cover},
    \[
        \dichia(T) \le \prod_{i,j\in[c]} \big( \dichia(T_i \cup T_j)+1 \big).
    \]    
    Therefore, it suffices to bound the acyclic dichromatic number of $T[T_i \cup T_j]$ for each $i,j \in [c]$. Fix $i,j \in [c]$. Note that $T[T_i]$ and $T[T_j]$ are acyclic induced subtournaments of $T[T_i \cup T_j]$, and after removing their edges, we are left with the bipartite tournament $T[T_i,T_j]$, so by Corollary~\ref{cor:cover}, we have $\dichia(T_i \cup T_j) \le 4 \cdot \dichia \big( T[T_i,T_j] \big)$. Denote $A = T_i$, $B = T_j$, and $T' = T[T_i, T_j]$. Let $f$ be a function that satisfies the statement of Lemma~\ref{lem:main}, and towards a contradiction, suppose that 
    \[
    \dichia(T') \ge f\big(r(k), r(k), 6k\big),
    \]
    where $r(k) = 2^{2k}+2^k+4k$.
    According to Lemma~\ref{lem:main}, either $T'$ contains a dimatching of size $r(k)$ or for every ordering of $B$ there is a vertex $a \in A$ that switches at least at $6k$ times.
    If $T'$ contains a dimatching of size $r(k)$, then $T$ contains $C_k$ by Lemma~\ref{lem:matching_to_champion}.
    
    
    Hence, assume that for every ordering of $B$, there is a vertex $a \in A$ that switches at least at $6k$ distinct indices. Denote $\prec$ the topological ordering of $B$, so that we have $b \prec b'$ if and only if $b \ra b'$. Let $a \in A$ be a vertex that switches at $6k$ different vertices $b_1 \prec \dots \prec b_{6k}$. For each $j \in [6k]$, we denote respectively $b_j^-$ and $b^+_j$ the in-neighbour and the out-neighbour of $a$ among $b_j$ and its predecessor in~$\prec$. Remark that $b_j^-$ and $b_{j+1}^-$ may not be distinct, but $b_j^-$ and $b_{j+2}^-$ are always distinct. Let 
    \[
    X = \big\{a\big\} \cup \big\{b_2^-, b_4^-, \dots, b_{2k}^-\big\} \cup \big\{b_{2k+1}^+\big\} \cup  \big\{b_{2k+2}^-, b_{2k+4}^-, \dots, b_{4k}^-\big\} \cup \big\{b_{4k+2}^+, b_{4k+4}^+, \dots, b_{6k}^+ \big\}.
    \]
    Observe that the relations below hold, and thus $X$ induces a copy of $C_k$, a contradiction.
    \[
        \big\{b_2^-, b_4^-, \dots, b_{2k}^-\big\} \Ra \Big( X \sm \big\{b_2^-, b_4^-, \dots, b_{2k}^-\big\} \Big),
    \]
    
    \[
        a \Ra b_{2k+1}^+ \Ra \big\{b_{2k+2}^-, b_{2k+4}^-, \dots, b_{4k}^-\big\} \Ra a, \text{ and}
    \]
    
    \[
        \Big( X \sm \big\{b_{4k+2}^+, b_{4k+4}^+, \dots, b_{6k}^+ \big\} \Big) \Ra \big\{b_{4k+2}^+, b_{4k+4}^+, \dots, b_{6k}^+ \big\}.
    \]
    Hence $\dichia(T') < f\big(r(k), r(k), 6k\big)$. It follows that the acyclic dichromatic number of $T$ is at most
    \[
        g(k) := \big( 4 \cdot f \big( r(k),r(k),6k \big)+1 \big)^{h(k)^2}.
    \]
    This concludes the proof of the theorem.
\end{proof}
\end{document}